\newtheorem{theorem}{Theorem}
\theoremstyle{plain}
\newtheorem{conjecture}{Conjecture}
\numberwithin{equation}{section}
\begin{document}
\title[ON THE SOLUTIONS OF THE EQUATION]{ON THE SOLUTIONS OF THE EQUATION $%
\left( 4^{n}\right) ^{x}+p^{y}=z^{2}$}
\author{Bilge Peker}
\address{Elementary Mathematics Education Programme, Ahmet Kelesoglu
Education Faculty, Konya University, KONYA, TURKEY.}
\email{bilge.peker@yahoo.com}
\author{Selin (INAG) CENBERCI}
\address{Secondary Mathematics Education Programme, Ahmet Kelesoglu
Education Faculty, Konya University,KONYA, TURKEY.}
\email{inag\_s@hotmail.com}
\date{January 21, 2012}
\subjclass[2000]{Primary 11D61}
\keywords{Exponential Diophantine Equation}

\begin{abstract}
In this paper, we gave solutions of the Diophantine equations $%
16^{x}+p^{y}=z^{2},$ $64^{x}+p^{y}=z^{2}$ where $p$ is an odd prime, $n\in 
\mathbb{Z}
^{+}$and $x,y,z$ are non-negative integers. Finally we gave a generalization
of the Diophantine equation $\left( 4^{n}\right) ^{x}+p^{y}=z^{2}.$
\end{abstract}

\maketitle

\textbf{1. Introduction}

There are lots of studies about the Diophantine equation of type $%
a^{x}+b^{y}=c^{z}.$ In 1999, Z. Cao $\left[ 3\right] $ proved \ that this
equation has at most one solution with $z>1.$ In 2005, D. Acu $\left[ 2%
\right] $ showed that the Diophantine equation $2^{x}+5^{y}=z^{2}$ has
exactly two solutions in non-negative integers, i.e. $\left( x,y,z\right)
\in \left\{ \left( 3,0,3\right) ,\left( 2,1,3\right) \right\} .$ J. Sandor $%
\left[ 8\right] $ studied on the Diophantine equation $4^{x}+18^{y}=22^{z}.$
In 2011, A. Suvarnamani $\left[ 11\right] $ considered the Diophantine
equation $2^{x}+p^{y}=z^{2}$ where \textit{p }is a prime and $x,y,z$ are
non-negative integers. A. Suvarnamani, A. Singta and S. Chotchaisthit $\left[
12\right] $ found solutions of the Diophantine equations $4^{x}+7^{y}=z^{2}$
and $4^{x}+11^{y}=z^{2}.$ In 1657, Frenicle de Bessy $\left[ 7\right] $
solved a problem possed by Fermat: if $p$ is an odd prime and $n\geq 2$ is
integer, then the equation $x^{2}-1=p^{n}$ has no integer solution. S.
Chotchaisthit $\left[ 4\right] $ showed all non-negative integer solutions
of $4^{x}+p^{y}=z^{2}$ where $p$ is a prime number. B. Peker, S. I. Cenberci 
$\left[ 6\right] $ studied on the solutions of the Diophantine eqations of
some types $\left( 2^{n}\right) ^{x}+p^{y}=z^{2}$ where $p$ is an odd prime
and $x,y,z$ are non-negative integers$.$

In this study, we gave solutions of the Diophantine equations $%
16^{x}+p^{y}=z^{2},$ $64^{x}+p^{y}=z^{2}$ where $p$ is an odd prime and $%
x,y,z$ are non-negative integers. Then we gave a generalization of the
Diophantine equation $\left( 4^{n}\right) ^{x}+p^{y}=z^{2}.$

\textbf{2. Preliminaries}

In this study, we use Catalan's Conjecture $\left[ 5\right] $ and a theorem.
Now we give them.

\begin{conjecture}
(Catalan) The only solution in integers $a>1,\ b>1,\ x>1$ and $y>1$ of the
equation $a^{x}-b^{y}=1$ is $a=y=3$ and $b=x=2.$
\end{conjecture}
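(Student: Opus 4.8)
The statement to be established is Catalan's conjecture: the only solution of $a^{x}-b^{y}=1$ with $a,b,x,y>1$ is $a=y=3,\ b=x=2$, i.e.\ $3^{2}-2^{3}=1$, so that $8$ and $9$ are the only consecutive perfect powers. Within the present paper this is invoked as a known result (reference $[5]$), and for the later analysis of $\left(4^{n}\right)^{x}+p^{y}=z^{2}$ it may simply be quoted. A genuine proof, however, is deep — the conjecture stood open from $1844$ until it was settled by Mihailescu in $2002$ — and the plan I would follow is modelled on his argument.

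First I would dispose of the cases in which one exponent equals $2$ by classical elementary means. A theorem of V.~A.~Lebesgue shows that $a^{x}-b^{2}=1$ has no solution in the required range, and the theorem of Ko~Chao shows that $a^{2}-b^{y}=1$ has no solution for odd $y>3$, while $y=3$ produces exactly $3^{2}-2^{3}=1$. These reductions leave only the case in which both exponents are odd primes $p$ and $q$, so it suffices to treat $x^{p}-y^{q}=1$ with $x,y$ positive integers.

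The core of the argument is then cyclotomic. Following Cassels, I would first establish the divisibility relations $p\mid y$ and $q\mid x$; these allow the equation to be factored inside $\mathbb{Z}[\zeta_{p}]$ and the resulting ideals to be analysed by means of Stickelberger's theorem. The decisive step, due to Mihailescu, is to show that any nontrivial solution forces a \emph{double Wieferich} condition, namely $p^{q-1}\equiv 1\pmod{q^{2}}$ and $q^{p-1}\equiv 1\pmod{p^{2}}$, and then to exploit the Galois-module structure of the ideal class group of $\mathbb{Q}(\zeta_{p})$ — in particular the action of the Stickelberger ideal on the minus part, together with information about the plus part — to derive a contradiction.

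The hard part will be precisely this cyclotomic machinery: controlling annihilators of the class group and combining the Wieferich constraints with the structure of the plus and minus components demands the full strength of the theory of cyclotomic fields, and is exactly what kept the conjecture open for more than a century. For the modest aims of this paper none of this is reproduced; the conjecture is taken as established and used only as a tool to exclude the auxiliary equations, in which a prime power differs from a perfect square by a controlled amount, that arise in the treatment of $\left(4^{n}\right)^{x}+p^{y}=z^{2}$.
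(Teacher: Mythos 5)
The paper offers no proof of this statement at all: despite the ``conjecture'' label, it is Mihailescu's theorem, imported by citation to reference $\left[5\right]$ and used purely as a tool, which is exactly how you treat it. Your roadmap of Mihailescu's argument (Lebesgue and Ko Chao for the square-exponent cases, Cassels's divisibility relations, Stickelberger's theorem and the double Wieferich criterion) is accurate as a sketch, and your conclusion that the result should simply be quoted matches the paper's approach.
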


\begin{theorem}
$\left[ 7\right] $ \ If $p$ is an odd prime and $n\geq 2$ is integer, then
the equation $x^{2}-1=p^{n}$ has no integer solution.
\end{theorem}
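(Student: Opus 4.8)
The plan is to exploit the factorization $x^{2}-1=(x-1)(x+1)=p^{n}$ together with unique factorization. Since $p^{n}>0$ forces $x^{2}>1$, and $x^{2}$ depends only on $|x|$, I may assume $x>0$, hence $x\geq 2$; then both factors satisfy $x-1\geq 1$ and $x+1\geq 3$ and are positive. Because the product $(x-1)(x+1)$ equals $p^{n}$, each factor is a positive divisor of $p^{n}$, and every positive divisor of $p^{n}$ is a power of $p$. Thus I can write $x-1=p^{a}$ and $x+1=p^{b}$ with non-negative integers $a\leq b$ and $a+b=n$.

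The decisive step is to subtract these two relations, which gives $p^{b}-p^{a}=(x+1)-(x-1)=2$. If $a\geq 1$, then $p$ divides the left-hand side, so $p\mid 2$; but $p$ is an odd prime, a contradiction. Hence $a=0$, i.e. $x-1=1$ and $x=2$. Feeding this back yields $x+1=3=p^{b}=p^{n}$, which forces $p=3$ and $n=1$. This contradicts the hypothesis $n\geq 2$, so the equation has no integer solution. The only point that genuinely drives the argument is the use of the oddness of $p$ to kill the case $a\geq 1$; everything else is bookkeeping, so I do not expect a serious obstacle here.

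As a cross-check, the statement also follows immediately from Catalan's Conjecture. Rewriting the equation as $x^{2}-p^{n}=1$, the reduction above shows $x\geq 2$, while $p\geq 3$ and the exponents $2$ and $n\geq 2$ all exceed $1$. The unique Catalan solution $3^{2}-2^{3}=1$ would then require $p^{n}=2^{3}$, which is impossible for an odd prime $p$. Hence no solution exists, in agreement with the elementary factorization argument, and I would present the factorization proof as the primary one since it is self-contained and does not invoke the full strength of Catalan.
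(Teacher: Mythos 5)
Your proof is correct, but note that the paper itself contains no proof of this statement: it is quoted as a known preliminary result (Frenicle de Bessy's answer to a problem of Fermat), cited from Ribenboim $[7]$, so there is no internal argument to compare yours against. What you have done differently is supply a complete, self-contained elementary proof: the factorization $(x-1)(x+1)=p^{n}$, the observation that both factors must then be powers of $p$, say $x-1=p^{a}$ and $x+1=p^{b}$ with $a+b=n$, and the subtraction $p^{b}-p^{a}=2$, where the oddness of $p$ forces $a=0$, hence $x=2$, $p=3$, $n=1$, contradicting $n\geq 2$. Every step checks out, including the reduction to $x\geq 2$, and nothing beyond unique factorization is used. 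Your Catalan-based cross-check is also valid and is actually closer in spirit to the paper's own toolkit, since Catalan's conjecture (Mihailescu's theorem) is the main engine in the paper's other proofs; but as you observe, it is far heavier machinery than the statement needs. Your choice to present the factorization argument as primary is the right one: it replaces a cited black-box result with a short lemma, which is a genuine improvement in self-containedness over the paper's treatment.
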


Now we give our theorems.

\begin{theorem}
When $p$ is an odd prime and $k,x,y,z$ are non-negative integers, solutions
of the Diophantine equation%
\begin{equation*}
16^{x}+p^{y}=z^{2}\ \ \ \ \ \ \ \ \ \ \ \left( 1\right)
\end{equation*}

are given in the following two cases:
\end{theorem}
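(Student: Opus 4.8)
The plan is to exploit that $16^{x}=\left( 4^{x}\right) ^{2}$ is a perfect square, so that equation $(1)$ becomes $z^{2}-\left( 4^{x}\right) ^{2}=p^{y}$ and factors over the integers as
\[
\left( z-4^{x}\right) \left( z+4^{x}\right) =p^{y}.
\]
Since $z^{2}=16^{x}+p^{y}>\left( 4^{x}\right) ^{2}$ we have $z>4^{x}$, so both factors are positive integers with $z-4^{x}\geq 1$. Their difference is $2\cdot 4^{x}=2^{2x+1}$, hence any common divisor of the two factors divides $2^{2x+1}$; because $p$ is odd, the two factors are coprime. A prime power admits only the coprime factorization $1\cdot p^{y}$, and as $z-4^{x}<z+4^{x}$ this forces $z-4^{x}=1$. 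Thus $z=4^{x}+1$ and $z+4^{x}=p^{y}$, so the whole problem collapses to the single exponential equation
\[
p^{y}=2\cdot 4^{x}+1=2^{2x+1}+1.
\]

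The decisive step is then a divisibility observation: $2x+1$ is odd, so $2+1=3$ divides $2^{2x+1}+1$, whence $3\mid p^{y}$ and therefore $p=3$. This already settles one of the two cases, namely that for every odd prime $p\neq 3$ equation $(1)$ has no non-negative integer solution. I expect this to be the cleanest part of the argument. (The value $y=0$ is excluded at once, since then the reduction would read $2^{2x+1}+1=1$, which is impossible.)

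For the remaining case $p=3$ I would solve $3^{y}=2^{2x+1}+1$, equivalently $3^{y}-2^{2x+1}=1$, splitting off the small exponents from the regime governed by Catalan's Conjecture. If $y=1$ then $2^{2x+1}=2$, so $x=0$ and $z=2$, giving the solution $\left( x,y,z\right) =\left( 0,1,2\right)$. If $y\geq 2$ and $x\geq 1$, then all four exponents in $3^{y}-2^{2x+1}=1$ exceed $1$, so Catalan's Conjecture leaves only $3^{2}-2^{3}=1$; this yields $y=2$, $2x+1=3$, hence $x=1$ and $z=5$, i.e. the solution $\left( x,y,z\right) =\left( 1,2,5\right)$. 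The boundary combination $y\geq 2$, $x=0$ gives $3^{y}=3$, which is impossible. Hence Case $1$ ($p=3$) has exactly the solutions $\left( 0,1,2\right)$ and $\left( 1,2,5\right)$, while Case $2$ ($p\neq 3$) has none; one may also recover the $x=0$ line independently from the cited result on $z^{2}-1=p^{y}$.

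I anticipate the main obstacle to be careful bookkeeping of the boundary values $x=0$ and $y\in\{0,1\}$ rather than any deep difficulty, since Catalan's Conjecture does the heavy lifting but only controls the range where both exponents are at least $2$. These small cases must therefore be checked by hand and cross-matched with the reduction $p^{y}=2^{2x+1}+1$ to guarantee that no solution is missed or introduced spuriously.
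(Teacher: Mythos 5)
Your proof is correct, and it takes a genuinely different route from the paper's. Both arguments begin the same way: factoring $z^{2}-\left( 4^{x}\right) ^{2}=p^{y}$ and using the oddness of $p$ to force the smaller factor to equal $1$ (the paper phrases this as $z-4^{x}=p^{v}$ with $p^{v}\mid 2^{2x+1}$, hence $v=0$), so both reach the same reduction $p^{y}=2^{2x+1}+1$. The divergence is your decisive step: since $2x+1$ is odd, $3\mid 2^{2x+1}+1$, so $p=3$ outright, after which a single application of Catalan's Conjecture (plus the hand-checked boundary cases $y\in \{0,1\}$ and $x=0$) finishes everything. The paper instead applies Catalan to $p^{y}-2^{2x+1}=1$ to obtain $(1,2,5)$ with $p=3$, and treats $y=1$ as a separate case, recording the parametric family $p=1+2^{2k+1}$, $\left( x,y,z\right) =\left( k,1,2^{2k}+1\right) $ without ever asking when such a $p$ is actually prime. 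Your mod-$3$ observation answers that question: $2^{2k+1}+1$ is divisible by $3$ and exceeds $3$ for $k\geq 1$, so the paper's Case 2 hypothesis is satisfiable only for $k=0$, i.e. $p=3$ and $\left( x,y,z\right) =\left( 0,1,2\right) $ --- precisely the extra solution you found. Thus your conclusion is consistent with the theorem but strictly sharper: the complete solution set is $p=3$ with $\left( x,y,z\right) \in \left\{ \left( 0,1,2\right) ,\left( 1,2,5\right) \right\} $, and no other odd prime admits any solution. Incidentally, your handling of $x=0$ is also more careful than the paper's: in that branch the paper invokes Catalan where it cannot apply (the exponent $y=1$ is not greater than $1$) and so overlooks $\left( 0,1,2\right) $ there, recovering it only implicitly through its Case 2.
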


\ \ \ \ \ \textit{Case 1. For} $p=3,$ $\left( x,y,z\right) =\left(
1,2,5\right) ,$

\ \ \ \ \ \textit{Case 2. For} $p=1+2^{2k+1},$ $\left( x,y,z\right) =\left(
k,1,2^{2k}+1\right) .$

\ \ \ \ \ \textit{\ }\ \ \ \ \ \ \ \ \ \ \ \ \ \ \ \ \ \ \ \ \ \ \ \ \ \ \ \
\ \ \ \ \ \ \ \ \ \ \ 

\begin{proof}
If we consider $y>0,$ then we get%
\begin{equation*}
\ z^{2}-4^{2x}=p^{y}
\end{equation*}

i.e.$\ $%
\begin{equation*}
(z-4^{x})(z+4^{x})=p^{y}
\end{equation*}

where $z-4^{x}=p^{v}$ and $z+4^{x}=p^{y-v},$ $y>2v$ and $v$ is a
non-negative integer. Then we get $p^{y-v}-p^{v}=2^{2x+1}$ or $%
p^{v}(p^{y-2v}-1)=2^{2x+1}.$

If $v=0$, we obtain $p^{y}-1=2^{2x+1}$ or $p^{y}-2^{2x+1}=1$. From the $%
Catalan^{\prime }s$ $Conjecture$, it is obvious that $p=3,\ y=2\ $and $%
2x+1=3,\ x=1.$ Hence a solution of the equation $\left( 1\right) $ is $%
(x,y,z,p)=(1,2,5,3).$

For $y=1$, we obtain $p-2^{2x+1}=1,$ i.e. $p=1+2^{2x+1}$. Hence solutions of
the equation $\left( 1\right) $ are $(x,y,z)=(x,1,2^{2x}+1).$

For $y=0$, we get $z^{2}-4^{2x}=1,$ i.e. $z^{2}-2^{4x}=1$ which is no
solution when $z=0$ or $z=1.$ From the $Catalan^{\prime }s$ $Conjecture$, $%
z=3$ and $4x=3$ which is impossible.

If we consider $x=0,$ then we have%
\begin{equation*}
p^{y}+1=z^{2}
\end{equation*}

i.e.%
\begin{equation*}
z^{2}-p^{y}=1.
\end{equation*}

From the $Catalan^{\prime }s$ $Conjecture$, it is obvious that $p=2,\ y=3\ $%
and $z=3.$ Hence a solution of the equation $\left( 1\right) $ is $%
(x,y,z,p)=(0,3,3,2).$ But this is not a solution, since we assume that $p$
is an odd prime.

Consequently, it is easy to see that $\left( x,y,z,p\right) =\left(
1,2,5,3\right) $ or $(x,y,z,p)=(k,1,2^{2k}+1,1+2^{2k+1})$ where $k$ is a
non-negative integer are solutions of the equation $16^{x}+p^{y}=z^{2}.$
\end{proof}

\begin{theorem}
When $p$ is an odd prime and $k,x,y,z$ are non-negative integers, solutions
of the Diophantine equation%
\begin{equation*}
64^{x}+p^{y}=z^{2}\ \ \ \ \ \ \ \ \ \ \ \left( 2\right)
\end{equation*}

is given $\left( x,y,z\right) =\left( k,1,2^{3k}+1\right) $ \ for $%
p=1+2^{3k+1}.$
\end{theorem}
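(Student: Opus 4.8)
The plan is to mirror the proof of the preceding theorem, the decisive simplification being that $64 = 2^{6}$, so that $64^{x} = \left( 2^{3x}\right) ^{2} = \left( 8^{x}\right) ^{2}$ is a perfect square. First I would assume $y>0$ and rewrite equation $\left( 2\right) $ as $z^{2} - \left( 8^{x}\right) ^{2} = p^{y}$, then factor the left-hand side as $\left( z - 8^{x}\right) \left( z + 8^{x}\right) = p^{y}$. Since $p$ is prime, each factor is a power of $p$, say $z - 8^{x} = p^{v}$ and $z + 8^{x} = p^{y-v}$ with $0 \le v$ and $y > 2v$. Subtracting the two equations gives $p^{y-v} - p^{v} = 2\cdot 8^{x} = 2^{3x+1}$, i.e. $p^{v}\left( p^{y-2v} - 1\right) = 2^{3x+1}$. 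As $p$ is an odd prime, the factor $p^{v}$ is odd yet divides a power of $2$, which forces $v = 0$ and reduces the whole problem to the single equation $p^{y} - 2^{3x+1} = 1$.

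Next I would split on $y$. For $y = 1$ the relation becomes $p = 1 + 2^{3x+1}$, and back-substituting $v = 0$ yields $z = 8^{x} + 1 = 2^{3x} + 1$; renaming $x = k$ gives exactly the advertised family $\left( x,y,z\right) = \left( k,1,2^{3k}+1\right) $ with $p = 1 + 2^{3k+1}$. For $y \ge 2$ (and $x \ge 1$, so that both exponents exceed $1$) I would apply Catalan's Conjecture to $p^{y} - 2^{3x+1} = 1$: the only admissible instance is $3^{2} - 2^{3} = 1$, which would require $p = 3$, $y = 2$, and $2^{3x+1} = 2^{3}$, hence $3x + 1 = 3$.

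Here lies the crux, and the genuine difference from the $16^{x}$ equation: $3x + 1 = 3$ forces $x = 2/3$, which is not a non-negative integer, so the Catalan branch produces no solution at all --- whereas in the $16^{x}$ case the analogous congruence $2x + 1 = 3$ admitted the legitimate value $x = 1$ and hence a sporadic solution at $p = 3$. The main obstacle to guard against is precisely the temptation to expect an analogous $p = 3$ exception here; one must check that the exponent equation $3x + 1 = 3$ has no integer root, which is what collapses the list of solutions to a single family.

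Finally I would clear the boundary cases. For $y = 0$ one gets $z^{2} - 2^{6x} = 1$; the values $z \in \{0,1\}$ fail directly, and Catalan's Conjecture forces $z = 3$ with $2^{6x} = 8$, i.e. $6x = 3$, again impossible in integers. For $x = 0$ the equation reads $z^{2} - p^{y} = 1$, whose only Catalan solution is $p = 2,\ y = 3,\ z = 3$, excluded since $p$ must be odd (the surviving sub-case $y = 1$ here is already the $k = 0$ member $\left( 0,1,2\right) $ of the family). Collecting all cases leaves $\left( x,y,z\right) = \left( k,1,2^{3k}+1\right) $ for $p = 1 + 2^{3k+1}$ as the only solutions, as claimed.
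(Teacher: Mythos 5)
Your proof is correct and takes essentially the same approach as the paper: the paper's own proof of this theorem is a one-line appeal to the proof of the preceding $16^{x}$ theorem (factor $z^{2}-\left( 8^{x}\right) ^{2}=p^{y}$, force $v=0$ since $p$ is odd, then split on $y$ and apply Catalan), which is exactly the argument you carried out. Your explicit check that $3x+1=3$ has no integer root --- so that, unlike the $16^{x}$ case, no sporadic $p=3$ solution survives --- is precisely the detail the paper leaves as ``obvious.''
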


\begin{proof}
By using the proof of the \textit{Theorem 2}, solutions of the equation $%
\left( 2\right) $ are obvious.
\end{proof}

Now, in the following theorem we will give a generalization of the above
Diophantine equations.

\begin{theorem}
When $p$ is an odd prime, $n\in 
\mathbb{Z}
^{+}$ and $k,x,y,z$ are non-negative integers, solutions of the Diophantine
equation%
\begin{equation*}
\left( 4^{n}\right) ^{x}+p^{y}=z^{2}\ \ \ \ \ \ \ \ \ \ \ \left( 3\right)
\end{equation*}

are given in the following two cases:

Case 1. For \ $p=3,$ $\left( x,y,z\right) =\left\{ \left( 2,2,5\right)
,\left( 1,2,5\right) \right\} ,$

\textit{Case 2. For} $p=1+2^{nk+1},$ $\left( x,y,z\right) =\left(
k,1,2^{nk}+1\right) .$
\end{theorem}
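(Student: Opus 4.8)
The plan is to mimic the proof of Theorem 2, with the exponent $2x$ (coming from $16^{x}=4^{2x}$) replaced by $nx$ (coming from $\left(4^{n}\right)^{x}=4^{nx}$), and then to read off the two cases directly from Catalan's Conjecture. First I would record that $\left(4^{n}\right)^{x}=4^{nx}=2^{2nx}=\left(2^{nx}\right)^{2}$ is a perfect square, and then split the argument according to whether $y>0$, $y=0$, or $x=0$.

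Assume first $x,y>0$. Rewriting $(3)$ as $z^{2}-\left(2^{nx}\right)^{2}=p^{y}$ and factoring gives $\left(z-2^{nx}\right)\left(z+2^{nx}\right)=p^{y}$. Since $p$ is prime, both factors are powers of $p$, say $z-2^{nx}=p^{v}$ and $z+2^{nx}=p^{y-v}$ with $0\le v$ and $y>2v$; subtracting yields $p^{v}\left(p^{y-2v}-1\right)=2^{nx+1}$. The step I would make more careful than the model proof is the following: the right-hand side is a power of $2$, while $p^{v}$ is odd because $p$ is odd, so necessarily $p^{v}=1$, i.e. $v=0$. This collapses the problem to the single equation $p^{y}-2^{nx+1}=1$.

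Now I would split on $y$. If $y\ge 2$, then since $x\ge 1$ and $n\ge 1$ force $nx\ge 1$, all four of $p,\,y,\,2,\,nx+1$ exceed $1$; Catalan's Conjecture then applies to $p^{y}-2^{nx+1}=1$ and forces $p=3$, $y=2$ and $nx+1=3$, that is $nx=2$. Back-substituting $z=2^{nx}+1=5$ gives Case 1, and here I would observe that $nx=2$ is solvable in positive integers only for $n\in\{1,2\}$, which is exactly why Case 1 lists the two triples $\left(2,2,5\right)$ (for $n=1$, $x=2$) and $\left(1,2,5\right)$ (for $n=2$, $x=1$). If instead $y=1$, the equation reads $p=1+2^{nx+1}$; writing $x=k$ and computing $z=2^{nk}+1$ produces Case 2. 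Finally, for $y=0$ the equation becomes $z^{2}-\left(2^{nx}\right)^{2}=1$, a difference of two squares equal to $1$, which is impossible since $2^{nx}\ge 1$ (equivalently, Catalan would require $2nx=3$).

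It remains to dispose of $x=0$. Then $(3)$ is $z^{2}-p^{y}=1$; Catalan's Conjecture gives the single candidate $z=3,\ p=2,\ y=3$, which is excluded because $p$ must be odd, while the residual case $y=1$ (where Catalan does not apply) gives $p=z^{2}-1=\left(z-1\right)\left(z+1\right)$, forcing $z=2$, $p=3$, already recorded as the $k=0$ instance of Case 2. The main obstacle is not any isolated computation but the bookkeeping around the hypotheses of Catalan's Conjecture: one must verify that all relevant exponents strictly exceed $1$ (this fails precisely in the boundary cases $nx=0$ and $y\le 1$, which is why they must be handled by hand), make the coprimality reduction $v=0$ rigorous, and confirm that the two triples enumerated in Case 1 correspond to admissible values of $n$.
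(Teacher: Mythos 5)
Your proof is correct and follows essentially the same route as the paper: factor $z^{2}-\left(2^{nx}\right)^{2}=p^{y}$ into two powers of $p$, reduce to $p^{y}-2^{nx+1}=1$, invoke Catalan's Conjecture to get $nx=2$ (hence the two triples of Case 1), and treat the boundary cases $y=1$, $y=0$, $x=0$ separately. In fact your write-up is tighter than the paper's in three spots: you actually prove $v=0$ is forced (the odd number $p^{v}$ divides $2^{nx+1}$), you check Catalan's hypotheses that every base and exponent exceeds $1$ before applying it, and you handle $x=0,\ y=1$, which the paper's proof silently skips even though it produces the genuine solution $(x,y,z,p)=(0,1,2,3)$ --- fortunately absorbed into Case 2 as the $k=0$ instance.
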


\begin{proof}
If we consider $y>0,$ then we get%
\begin{equation*}
\ z^{2}-4^{nx}=p^{y}
\end{equation*}

i.e.$\ $%
\begin{equation*}
(z-2^{nx})(z+2^{nx})=p^{y}
\end{equation*}

where $z-2^{nx}=p^{v}$ and $z+2^{nx}=p^{y-v},$ $y>2v$ and $v$ is a
non-negative integer. Then we get $p^{y-v}-p^{v}=2^{nx+1}$ or $%
p^{v}(p^{y-2v}-1)=2^{nx+1}.$

\ If $v=0$, we obtain $p^{y}-1=2^{nx+1}$ or $p^{y}-2^{nx+1}=1$. From the $%
Catalan^{\prime }s$ $Conjecture$, it is obvious that $p=3,\ y=2\ $and $%
nx+1=3,\ nx=2.$\ This gives us two cases.

First case: If $\ n=1$, then $x=2.$ So we obtain the Diophantine equation $%
4^{x}+p^{y}=z^{2}.$ With this condition, the solution of this equation was
given by Chotchaisthit $\left[ 4\right] $ in the form $\left( x,y,z,p\right)
=$ $\left( 2,2,5,3\right) $.

Second case: If $n=2,$ then $x=1.$ Therefore we obtain the Diophantine
equation $16^{x}+p^{y}=z^{2}$. \ With this condition, the solution of this
equation is given in the $Theorem$\textit{\ }$2$ in the form $\left(
x,y,z,p\right) =\left( 1,2,5,3\right) $.

\ If we consider $y=1$, we obtain $p-2^{nx+1}=1,$ i.e. $p=1+2^{nx+1}$. For $%
n=1,$ solutions of the equation $4^{x}+p^{y}=z^{2}$\ is of the form $\left(
x,y,z,p\right) =$ $\left( r,1,2^{r}+1,2^{r+1}+1\right) $ where r is a
non-negative integer in $\left[ 4\right] $. For $n=2,$ solutions of the
equation $\left( 1\right) $\ is of the form $\left( x,y,z,p\right) =$ $%
\left( k,1,2^{2k}+1,2^{2k+1}+1\right) $ where $k$ is a non-negative integer.
For $n=3,$ solutions of the equation $\left( 2\right) $\ is given by $\left(
x,y,z,p\right) =$ $\left( k,1,2^{3k}+1,2^{3k+1}+1\right) $ where $k$ is a
non-negative integer. If$\ n$ is any positive integer, solutions of the
equation $\left( 3\right) $ is given of the form $%
(x,y,z,p)=(k,1,2^{nk}+1,2^{nk+1}+1)$ where $k$ is a non-negative integer$.$

For $y=0$, we get $z^{2}-4^{nx}=1,$ i.e. $z^{2}-2^{2nx}=1$ which is no
solution when $z=0$ or $z=1.$ For the other cases if we consider the $%
Catalan^{\prime }s$ $Conjecture$, we don't find a solution.

If we consider $x=0,$ we have%
\begin{equation*}
p^{y}+1=z^{2}
\end{equation*}

i.e.%
\begin{equation*}
z^{2}-p^{y}=1.
\end{equation*}

From the $Catalan^{\prime }s$ $Conjecture$, it is obvious that $p=2,\ y=3\ $%
and $z=3.$ Hence a solution of the equation $\left( 3\right) $ is $%
(x,y,z,p)=(0,3,3,2).$ Since $p$ is not an odd prime, this is not a solution
becouse $p$ is an even prime.

Therefore, it is easy to see that $\left( x,y,z,p\right) =\left\{ \left(
1,2,5,3\right) ,\left( 2,2,5,3\right) \right\} $ or $%
(x,y,z,p)=(k,1,2^{nk}+1,1+2^{nk+1})$ where $k$ is a non-negative integer are
solutions of $\left( 4^{n}\right) ^{x}+p^{y}=z^{2}.$
\end{proof}

\bigskip

\bigskip

\end{document}